\def\th@plain{%
	\upshape 
}
\renewenvironment{proof}[1][\proofname]{\par
	\pushQED{\qed}%
	\normalfont \topsep6\p@\@plus6\p@\relax
	\trivlist
	\item[\hskip\labelsep
	\bfseries
	#1\@addpunct{.}]\ignorespaces
}{%
\popQED\endtrivlist\@endpefalse
}
\newtheorem{theorem}{Theorem}
\newtheorem{lem}{Lemma}
\newtheorem{cor}{Corollary}
\theoremstyle{definition}
\newtheorem{definition}{Definition}
\newtheorem{case}{Case}
\newtheorem{subcase}{Subcase}
\begin{document}
	\title{Truncated degree AT-orientations of outerplanar graphs }
	\author{Chenglong Deng and Xuding Zhu\thanks{This research is supported by Grant: NSFC 12371359, U20A2068. } \\
    School of Mathematical Science, Zhejiang Normal University, China}

	\maketitle
	
	\begin{abstract}
 An AT-orientation of a graph $G$ is an orientation $D$ of $G$ such that the number of even Eulerian sub-digraphs and the number of odd Eulerian sub-digraphs of $D$ are distinct. Given a mapping $f: V(G) \to \mathbb{N}$, we say $G$ is $f$-AT if $G$ has an AT-orientation $D$ with $d_D^+(v) < f(v)$ for each vertex $v$. For a positive integer $k$, we say $G$ is $k$-truncated degree-AT if $G$ is $f$-AT for the mapping $f$ defined as $f(v) = \min \{k, d_G(v)\}$. This paper proves that 2-connected  outerplanar graphs other than odd cycles are $5$-truncated degree-AT, and 2-connected bipartite outerplanar graphs are $4$-truncated degree-AT. As a consequence, 2-connected   outerplanar graphs other than odd cycles are $5$-truncated degree paintable, and 2-connected bipartite outerplanar graphs are $4$-truncated degree paintable. 
 This improves the result of Hutchinson in [On list-coloring outerplanar graphs. J. Graph Theory, 59(1):59–74, 2008], where it was proved that maximal 2-connected outerplanar graphs other than $K_3$ are 5-truncated degree-choosable, and 2-connected bipartite outerplanar graphs are 4-truncated degree-choosable. 
    \end{abstract}
\section{Introduction}
  
  Assume $G$ is a graph. A {\em list assignment} 
 of $G$ is a mapping  $L$  that  assigns to each vertex $v$   a set $L(v)$ of permissible colours. Given a list assignment $L$ of $G$, an $L$-colouring of $G$ is a mapping $\phi:V(G)\to\cup_{v\in V(G)} L(v)$ such that for each vertex $v,\phi(v)\in L(v)$ and for any edge $e=uv$ of $G$, $\phi(u)\neq\phi(v)$. We denote by $\mathbb{N}^G$  the set of mappings $f:V(G)\to\mathbb{N}=\{0,1,\ldots,\}$. For $f\in\mathbb{N}^G$, an {\em $f$-list assignment} of $G$ is a list assignment $L$ of $G$ with $|L(v)|\geq f(v)$ for each vertex $v$.  We say $G$ is $f$-choosable if $G$ is $L$-colourable for any $f$-list assignment $L$. The {\em choice  number} $ch( G)$ of $G$ is the minimum $k$ for which $G$ is $k$-choosable. We say $G$ is {\em degree-choosable} if $G$ is $f$-choosable for the function $f$ defined as $f(v)=d_G(v)$ for each vertex $v$.

  For $k \ge 3$, it is NP-hard to determine if a given graph $G$ is $k$-choosable. On the other hand, it is easy to decide if a graph $G$ is degree-choosable. The following result was proved in \cite{MR593902} and \cite{MR498216}.

  \begin{theorem}
      \label{thm-degreechoosable}
      A connected graph $G$ is not degree-choosable if and only if $G$ is a Gallai-tree, i.e., each block of $G$ is a clique or an odd cycle.
  \end{theorem}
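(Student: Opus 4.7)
The plan is to prove both implications separately.

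\emph{The ``if'' direction.} Assume every block of $G$ is a clique or an odd cycle. By induction on the number of blocks of $G$, I build a list assignment $L$ with $|L(v)|=d_G(v)$ admitting no proper $L$-colouring. The single-block base cases are standard: on $K_n$, the constant list $L(v)=\{1,\ldots,n-1\}$ fails because $n$ mutually adjacent vertices need $n$ colours, while on $C_{2k+1}$, the constant list $L(v)=\{1,2\}$ fails because odd cycles have no proper $2$-colouring. For the inductive step, choose a leaf block $B$ of $G$ with cut-vertex $w$, apply the inductive hypothesis to $G-(V(B)\setminus\{w\})$ to obtain a bad list assignment $L'$, and extend $L'$ to $B$ using appropriately shifted copies of the base-case palette. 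The identity $d_G(w)=d_{G-(V(B)\setminus\{w\})}(w)+d_B(w)$ provides exactly the room needed at $w$, and by shifting the palette on $V(B)\setminus\{w\}$ one can ensure that whichever colour is forced on $w$ by $L'$, the restricted lists on $B$ still reduce to the base case.

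\emph{The ``only if'' direction, reduction to 2-connected graphs.} Assume some block $B_0$ of $G$ is neither a clique nor an odd cycle, and let $L$ be a degree-list assignment. If $G$ has any leaf block $B'\ne B_0$ with cut-vertex $w$, I apply the inductive hypothesis to $G-(V(B')\setminus\{w\})$ to properly colour it; since $d_G(w)>d_{B'}(w)$, the colour of $w$ can then be extended to a proper $L$-colouring of $B'$ by the trivial greedy colouring on a clique or an odd cycle with one colour forbidden at a single vertex. This reduces the problem to the case where $G$ itself is 2-connected, non-complete, and not an odd cycle.

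\emph{The base case.} This is the main obstacle. I would employ the kernel method: it suffices to exhibit an orientation $D$ of $G$ with $d_D^+(v)<d_G(v)$ for every vertex $v$ such that every induced subdigraph of $D$ has a kernel, since a standard greedy argument then produces an $L$-colouring for every degree-list assignment $L$. The structural input required is that every 2-connected graph other than $K_n$ and odd cycles contains either a vertex with two non-adjacent neighbours or an induced even cycle; I would orient such a substructure so that each of its vertices has out-degree strictly less than its degree in $G$, then extend along an ear decomposition, orienting each new ear so as to preserve kernel-perfectness. Verifying kernel-perfectness globally is the delicate step, and is typically handled by exhibiting an underlying bipartite-like partition of the vertices that the orientation respects.
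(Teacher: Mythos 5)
The paper offers no proof of this statement; it is quoted as a known theorem of Erd\H{o}s--Rubin--Taylor and Vizing (also Borodin), so your proposal can only be judged on its own merits. The ``if'' direction is essentially the standard construction (pairwise disjoint palettes per block, peel off an end block, argue that the structure of the end block forces the cut vertex out of that block's palette) and is fine modulo the vague ``shifting'' language. The ``only if'' direction, however, has two genuine gaps. The first is that your reduction to the $2$-connected case runs the induction in the wrong order. After colouring $G-(V(B')\setminus\{w\})$ you cannot in general extend into a leaf block $B'$ that is a clique or an odd cycle: for $B'=K_3$ with cut vertex $w$ and $L(x)=L(y)=\{\phi(w),z\}$ (legitimate, since $d_G(x)=d_G(y)=2$), both $x$ and $y$ are forced to the colour $z$ although $xy\in E(G)$; an analogous failure occurs for odd cycle blocks, since $C_{2k+1}$ with lists of size $2$ and one vertex precoloured is exactly the non-$2$-choosability of odd cycles. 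Cliques and odd cycles are precisely the blocks that are \emph{not} degree-choosable, so precolouring one vertex cannot rescue them, and the inductive hypothesis gives you no control over $\phi(w)$. The correct argument colours the Gallai part \emph{first}, greedily in order of decreasing distance from a special anchor subgraph inside the bad block (every vertex outside the anchor has an uncoloured neighbour nearer to it when its turn comes, hence at most $d(v)-1$ forbidden colours), and colours the anchor last.

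The second and more serious gap is the kernel-method base case, which fails already for $G=K_4-e$: label the nonadjacent degree-$2$ vertices $a,b$ and the adjacent degree-$3$ vertices $c,d$. Any orientation with $d^+(a),d^+(b)\le 1$ and $d^+(c),d^+(d)\le 2$ must orient at least one of the triangles $acd$, $bcd$ as a directed $3$-cycle (a transitive triangle needs a source of out-degree $2$ inside it, which $a$ and $b$ cannot afford, and $c,d$ cannot simultaneously serve as sources of both triangles without violating $d^+\le 2$ or orienting $cd$ both ways). Since each triangle is an induced subgraph, that directed $3$-cycle is an induced subdigraph without a kernel, so no kernel-perfect orientation with $d^+(v)<d_G(v)$ exists even though $K_4-e$ is degree-choosable. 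Thus no extension along ear decompositions can ``preserve kernel-perfectness''; the property is simply unattainable. The standard route for the $2$-connected case is Rubin's structural lemma (a $2$-connected graph other than a clique or a cycle contains an induced even cycle with at most one chord, equivalently an induced even cycle or an induced theta subgraph, or one exploits a vertex with two nonadjacent neighbours directly) combined with an explicit list-colouring argument: either identify the colours of two nonadjacent vertices with a common neighbour, or use the $2$-choosability of even cycles and of the relevant theta graphs, then finish greedily. If you insist on an orientation-based proof, the Alon--Tarsi polynomial method does succeed here (this is the Hladk\'y--Kr\'a\v{l}--Schauz approach, and indeed $K_4-e$ admits a degree-AT orientation), but kernel-perfectness is strictly stronger and is the wrong tool.
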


The concept of $k$-truncated degree-choosability, which  combines $k$-choosability and degree-choosable, is defined as follows:

 \begin{definition}
	\label{def-tdat}
	Assume $G$ is graph and $k$ is a positive integer. We say $G$ is {\em $k$-truncated degree-choosable}  if $G$ is $f$-choosable, where $f$ is defined as $f(v)=\min\{k,d_G(v)\}$ for $v\in V(G)$.  
\end{definition}

The term ``$k$-truncated degree-choosable" was first used in \cite{ZZZ}, however, the $k$-truncated degree choosability of outerplanar graphs was first studied by Hutchinson \cite{MR2432887} in 2008. Richter asked whether every 3-connected non-complete planar graph is 6-truncated-degree-choosable (see \cite{MR2432887}). 
Note that if $G$ is a Gallai-tree, then $G$ is not degree-choosable, and hence not $k$-truncated degree-choosable for any $k$. If $G=K_{2, k^2}$, then it is also easy to see that $G$ is not $k$-truncated degree-choosable. Some other non-$k$-truncated degree-choosable planar graphs can be obtained from $K_{2, k^2}$ by some simple graph operation, such as subdiving edges, etc. 
Thus to study truncated degree-choosability of planar graphs,   it is natural to restrict to $3$-connected non-complete planar graphs.

  Hutchinson \cite{MR2432887} first studied Richter's problem. Instead of planar graphs, she considered truncated degree choosability of outerplanar graphs. She proved that 2-connected maximal outerplanar graphs  (i.e., each inner face is a triangle) other than $K_3$ are $5$-truncated degree-choosable, and 2-connected bipartite outerplanar graphs are $4$-truncated degree-choosable. 
  
  Note that a graph  $G$ is $k$-truncated degree-choosable does not imply that  a subgraph of $G$ is  $k$-truncated degree-choosable.
  Hutchinson asked whether her result can be extended to all   2-connected  outerplanar graphs, i.e., every 2-connected outerplanar graph other than odd cycles are $5$-truncated degree-choosable.
  This question was answered in \cite{LWZZ},  where truncated degree DP-colourability of $K_{2,4}$-minor free graphs was considered.   The concept of DP-colouring of a graph is a variation of list colouring. For $f \in \mathbb{N}^G$, an {\em $f$-cover } of $G$ is a pair $(L,M)$, where $L=\{L(v): v \in V(G)\}$ is a family of disjoint sets with $|L(v)| \ge f(v)$ for each vertex $v$, and $M=\{M_e:  e\in E(G)\}$ is a family of matchings, where for each edge $e=uv$, $M_e$ is a matching between $L(u)$ and $L(v)$. 
  Given an $f$-cover $(L,M)$ of $G$, an $(L,M)$-colouring of $G$ is a mapping $f: V(G) \to \bigcup_{v \in V(G)}L(v)$ such that $f(v) \in L(v)$ for each vertex $v$, and $f(u)f(v) \notin M_{uv}$ for each edge $uv \in E(G)$. We say $G$ is DP-$f$-colourable if for every $f$-cover $(L,M)$ of $G$, $G$ has a $(L,M)$-colouring. 
  The {\em DP-chromatic number} $\chi_P(G)$ of $G$ is the minimum $k$ such that $G$ is DP-$k$-colourable.
  
  It is easy to see and well known \cite{MR3758240} that if $G$ is DP-$f$-colourable, then $G$ is $f$-choosable, and hence $ch(G) \le \chi_P(G)$. Moreover, the difference $\chi_P(G)-ch(G)$ can be arbitrarily large. For a positive integer $k$, we say that $G$ is $k$-truncated degree-DP-colourable if $G$ is DP-$f$-colourable for the mapping $f$ defined as $f(v) = \min \{k, d_G(v)\}$.  
  
  In \cite{LWZZ} it was shown that 2-connected $K_{2,4}$-minor free graphs other than cycles and complete graphs are 5-truncated degree-DP-colourable. This implies that 2-connected $K_{2,4}$-minor free graphs other than cycles and complete graphs are 5-truncated degree-choosable. In particular, all 2-connected (not necessarily maximal) outerplanar graphs other than odd cycles are 5-truncated degree-choosable.

  The $k$-truncated degree-choosability of $K_5$-minor free graphs was studied in \cite{MR2959285}. Denote by $S_k$ the set of vertices of  degree less than $k$ and denote by $d(S_k)$ the smallest distance between connected components of $G[S_k]$. It was shown in \cite{MR2959285} that   for any $k \ge 3$, there are 3-connected non-complete 
 $K_5$-minor free graphs $G$ with $d(S_k) =2$  that are not $k$-truncated degree-choosable. On the other hand,  for $k \ge 8$, every connected $K_5$-minor free graph $G$ with $d(S_k)=3$ is $k$-truncated degree-choosable, provided that $G$ is not a Gallai-tree; and for $k \ge 7$, every 3-connected non-complete $K_5$-minor free graph $G$ with $d(S_k) \ge 3$ is $k$-truncated degree-choosable.  

  In \cite{ZZZ}, Zhou, Zhu and Zhu studied truncated degree choosability of planar graphs and proper minor closed families of graphs. They constructed a 3-connected noncomplete planar graph that is not 7-truncated degree-choosable, which answers Richter's question in negative (even if $6$ is replaced by $7$). On the other hand, they proved that every 3-connected noncomplete planar graph is 16-truncated degree-choosable. For a proper minor closed family $\mathcal{G}$ of graphs, let $s$ be the minimum integer such that $K_{s,t} \notin \mathcal{G}$ for some $t$. Then there is a constant $k$ such that every $s$-connected graph $G \in \mathcal{G}$ is $k$-truncated degree choosable, provided that $G$ is not a Gallai tree. 
  

In this paper, we extend Hutchinson's results  to online list colouring and Alon-Tarsi orientation. 

The coloration of an online list of a graph $G$ is defined by means of a two-person game. 
Given a graph $G$ and $f \in \mathbb{N}^G$, the {\em $f$-painting game} on $G$ is a game with two players: Lister and Painter. Initially, each vertex $v$ of $G$ is uncoloured and has $f(v)$ tokens. In each round, Lister chooses a set $U$ of uncoloured vertices and takes away one token from each vertex of $U$, and Painter colours an independent set $I$ contained in $U$.  Lister wins the game if there is an uncoloured vertex with no tokens left, and Painter wins the game if every vertex is coloured. We say $G$ is {\em $f$-paintable} if Painter has a winning strategy in the $f$-painting game on $G$. The {\em painter number} $\chi_P(G)$ of $G$ is the minimum integer $k$ for which $G$ is $k$-paintable. We say $G$ is {\em $k$-truncated degree-paintable} if $G$ is $f$-paintable for the function $f$ defined as $f(v)=\min\{k, d_G(v)\}$ for every vertex $v$.

 If  $L$ is an $f$-list assignment of $G$ with $\bigcup_{v \in V(G)}L(v) =[n] $, and in the $i$th round, Lister chooses $U_i = \{v: i \in L(v)\}$, then Painter's winning strategy constructs an $L$-colouring of $G$. So if $G$ is $f$-paintable, then $G$ is $f$-choosable, and $ch(G) \le \chi_P(G)$ for any $G$. The $f$-painting game is an online version of list colouring of $G$, in which the information of the list assignment is revealed in many rounds, and painter needs to make decision on colouring based on partial information of the list assignment. 
 It is known \cite{MR3558040} that the difference $\chi_P(G)-ch(G)$ can be arbitrarily large. 

The techniques used in \cite{MR2432887} and \cite{LWZZ} do not work for paintability. The tool we use in this paper is Alon-Tarsi orientation. We consider truncated degree AT-orientations of outerplanar graphs.   
  
  Assume $D$ is an orientation of a graph $G$. For a subset $H$ of arcs of $D$, $D[H]$ denotes the subdigraph induced by $H$. We say $D[H]$ is an {\em Eulerian subdigraph} if $d^{+}_{D[H]}(v)=d^{-}_{D[H]}(v)$ for each vertex $v$. Let \begin{eqnarray*}
  \mathcal{E}(D)&=&\{H: D[H] \text{  is an Eulerian subdigraph}\}, \\
  \mathcal{E}_e(D)&=&\{H\in \mathcal{E}(D) : |H| \text{ is even}\}, \text{  and }  \mathcal{E}_o(D)=\{H\in \mathcal{E}(D) : |H| \text{  is odd}\}, \\
  {\rm diff}(D) &=&|\mathcal{E}_e(D)|-|\mathcal{E}_o(D)|.
  \end{eqnarray*}
    We say $D$ is AT-orientation if ${\rm diff}(D)\ne 0$.

 For $f\in\mathbb{N}^G$, an {\em $f$-Alon-Tarsi orientation} ( an $f$-AT orientation, for short) of $G$ is an Alon-Tarsi orientation $D$ of $G$ with $d_D^+(v)\leq f(v)-1$ for each vertex $v$.  We say $G$ is $f$-AT if $G$ has an $f$-AT orientation. The {\em Alon-Tarsi number} $AT(G)$ of $G$ is the minimum $k$ such that $G$ is $k$-AT (i.e., $f$-AT for the constant function $f(v)=k$ for all $v$).

 We say $G$ is $k$-truncated degree-AT if $G$ is $f$-AT, where $f(v)=\min\{k, d_G(v)\}$ for every vertex $v$.
 We prove the following results.

\begin{theorem}
	\label{thm-main}
	If $G$ is a  2-connected  outerplanar graph and is not an odd cycle, then $G$ is 5-truncated degree-AT.
\end{theorem}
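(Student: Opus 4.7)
I would proceed by induction on $|V(G)|$. In the base case $G$ is a cycle, which by hypothesis is even; orienting $G$ as a single consistently-directed cycle gives every vertex out-degree $1$ (which matches $\min\{5,2\}-1$), and the only Eulerian sub-digraphs are $\emptyset$ and the full directed cycle (both of even size), so ${\rm diff}(D)=2\ne 0$. For the inductive step, assume $G$ has at least one chord. The weak dual of $G$ is a tree, so it has a leaf corresponding to an inner face $F$ whose boundary consists of exactly one chord $e=uv$ together with a path $P=uw_1w_2\cdots w_{k-1}v$ along the outer cycle ($k\ge 2$), where each $w_i$ has degree $2$ in $G$. Let $G'=G-\{w_1,\ldots,w_{k-1}\}$, which still contains the edge $uv$ and is either a 2-connected outerplanar graph, or a cycle when $G$ has exactly two inner faces.

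If $G'$ has at least one chord, then $G'$ is not a cycle (hence not an odd cycle), and induction yields an AT-orientation $D'$ of $G'$ with $d^+_{D'}(x)\le \min\{5,d_{G'}(x)\}-1$ for every $x$. I then extend $D'$ to an orientation $D$ of $G$ by choosing directions for the edges of $P$, subject to two requirements: (i) out-degree bounds in $G$, which when $d_G(u)\ge 5$ forces $uw_1$ to point toward $u$ (symmetrically at $v$), while each internal $w_i$ must receive out-degree at most $1$; and (ii) ${\rm diff}(D)\ne 0$. Eulerian sub-digraphs of $D$ split into those avoiding the $P$-edges (contributing ${\rm diff}(D')$) and those traversing all of $P$ (contributing, up to sign, a count of sub-digraphs of $D'$ whose in/out imbalance at $u$ is $+1$ and at $v$ is $-1$, or vice-versa, depending on the chosen orientation of $P$). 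Since the orientation of $e$ in $D'$ already forces one of these two signed counts to have a definite relationship to ${\rm diff}(D')$, I would pick the direction of $P$ to keep the two contributions from cancelling.

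The remaining case is that $G'$ is a cycle, that is, $G$ is itself a cycle with a single chord. Here I would handle $G$ directly: the chord splits the outer cycle into two arcs of lengths $a$ and $b$; orienting the two arcs in opposite directions (from $u$ to $v$ along one, from $v$ to $u$ along the other) and orienting the chord $u\to v$ gives exactly three Eulerian sub-digraphs, namely $\emptyset$, the outer cycle (size $a+b$), and the triangle-like cycle through the chord (size $b+1$). A short parity check shows $|{\rm diff}(D)|\in\{1,3\}$ in every parity scenario, while the required out-degrees ($2$ at $u,v$ and $1$ at every internal vertex) are satisfied.

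\textbf{Main obstacle.} The heart of the argument is the case analysis in the extension step, where the out-degree constraints at $u$ and $v$ when both have degree $\ge 5$ in $G$ restrict which orientations of $P$ are admissible, while the AT condition demands a sign-matching between ${\rm diff}(D')$ and the through-$P$ contribution. If the first natural orientation produces cancellation and a degree bound forbids simply reversing $P$, then I would either reverse $D'$ itself (flipping every arc, which preserves the AT property and the degree bounds) or choose a different leaf face $F$ of the weak dual. Verifying that one of these alternatives always succeeds — especially when $u$ or $v$ is incident with several chords — is the technical crux, and is likely where the ``not an odd cycle'' hypothesis gets genuinely used beyond the base case.
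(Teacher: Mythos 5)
Your base cases (even cycle, cycle plus one chord) are fine, but the inductive step has a genuine gap, and it is exactly the obstruction that forces the paper to prove a much stronger statement. Suppose the leaf face is bounded by the chord $uv$ and the path $P=uw_1\cdots w_{k-1}v$, and suppose $d_G(u)\ge 6$ and $d_G(v)\ge 6$ while the inductively obtained $D'$ has $d^+_{D'}(u)=d^+_{D'}(v)=4$ (the induction hypothesis gives you no control over this). Then both end-edges of $P$ must be oriented into $u$ and into $v$ respectively, so the $k$ edges of $P$ contribute total out-degree $k$ distributed among the $k-1$ internal vertices, forcing some $w_i$ to have out-degree $2>\min\{5,2\}-1$. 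So \emph{no} admissible orientation of $P$ exists, independently of any Alon--Tarsi considerations. Your proposed rescues do not close this: reversing every arc of $D'$ preserves ${\rm diff}$ but \emph{not} the degree bounds (a degree-$5$ vertex with out-degree $0$ acquires out-degree $5$), and choosing a different leaf face requires proving that some leaf face always avoids this configuration, which you do not attempt and which is false to rely on without a structural lemma. The paper escapes by strengthening the induction hypothesis: the base edge $uv$ is placed into a set $S'$ with an orientation, and the endpoint carrying the out-arc of $\vec S'$ is required to satisfy the tighter bound $d^+\le 4-2d^+_{\vec S}$, i.e., out-degree capacity is reserved in advance for the ear that will be glued back. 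Without some such ``bank'' device the induction does not go through.

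There is a second, independent gap in the Alon--Tarsi bookkeeping. The new Eulerian subdigraphs created by $P$ correspond to subdigraphs of $D'$ with a prescribed in/out imbalance at $u$ and $v$, and their signed count has no a priori relationship to ${\rm diff}(D')$; both orientations of $P$ could produce cancellation, and (per the above) you may not even have the freedom to choose between them. The paper sidesteps this by carrying the stronger invariant that $|\mathcal E(D,w)|$ is \emph{odd}, removing an entire ear-chain at once, and using arc weights of $2$ to destroy unwanted Eulerian subdigraphs, so that the verification reduces to showing the set of new Eulerian subdigraphs has even cardinality (by an explicit pairing or by being empty). Your plan correctly identifies where the difficulty lies, but the two mechanisms needed to resolve it are missing.
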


\begin{theorem}
    \label{thm-bi}
    If $G$ is a  2-connected bipartite outerplanar graph, then $G$ is 4-truncated degree-AT.
\end{theorem}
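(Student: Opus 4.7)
My plan is to prove Theorem~\ref{thm-bi} by induction on $|V(G)|$. For the base case $G = C_{2k}$ with $k \ge 2$, I would orient $G$ as a directed cycle: the only Eulerian sub-digraphs are $\emptyset$ and the full cycle, both of even size, so ${\rm diff} = 2 \ne 0$; and each vertex has out-degree $1 = \min\{4, d_G(v)\} - 1$.

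For the inductive step, suppose $G$ has at least two inner faces. The weak dual of $G$ is a tree with at least two nodes, so there is a leaf face $F$ of length $2\ell \ge 4$ sharing a single chord $uv$ with the rest of $G$; write the other boundary arc of $F$ as a path $P: u = w_0, w_1, \dots, w_{2\ell-1} = v$ whose internal vertices $w_1, \dots, w_{2\ell-2}$ all have degree $2$ in $G$. Let $G' = G - \{w_1, \dots, w_{2\ell-2}\}$, a strictly smaller 2-connected bipartite outerplanar graph to which the induction applies. I would extend the resulting 4-truncated AT-orientation $D'$ to $G$ by orienting $P$ either as a directed $v$-to-$u$ path (so $F$ becomes a directed cycle in $D$ and the extra out-arc is placed at $v$) or as a directed $u$-to-$v$ path (placing the extra out-arc at $u$). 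A short analysis of the constraint $d^+(w_i) \le 1$ at each internal degree-$2$ vertex shows that any valid orientation of $P$ forces at least one of $u, v$ to absorb an extra out-arc from $P$.

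The AT-property of $D$ would then be verified by splitting Eulerian sub-digraphs of $D$ into those that avoid all of $P$ (bijecting with $\mathcal{E}(D')$ and contributing ${\rm diff}(D')$) and those that contain all of $P$ (indexed by sub-digraphs of $D'$ carrying a one-unit flow between $u$ and $v$, which further decompose according to whether the chord $uv$ is used). Since $|P| = 2\ell - 1$ is odd, the parities of the two contributions combine with predictable signs, and an explicit computation yields ${\rm diff}(D) \ne 0$ once the orientation is chosen suitably. In the easy sub-cases $d_G(u) \le 4$ or $d_G(v) \le 4$, the extra out-arc is routed to the low-degree endpoint, which has $d^+_{D'} \le 2$ by the truncation on $D'$, so the bound $d^+_D \le \min\{4, d_G\} - 1$ is respected automatically.

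The main obstacle is the case $d_G(u), d_G(v) \ge 5$: here both endpoints may be tight in $D'$ with $d^+_{D'} = 3$, and neither choice of orientation for $P$ fits the truncated bound. I plan to handle this by strengthening the inductive statement to: for every vertex $x \in V(G)$ with $d_G(x) \ge 4$, $G$ carries a 4-truncated degree-AT orientation with $d^+(x) \le 2$ (slack at $x$ on demand). The strengthening is vacuous in the base case, and in the inductive step it is applied to $G'$ at $x = u$ (or $v$) to produce $D'$ with $d^+_{D'}(u) \le 2$; orienting $P$ then as a directed $u$-to-$v$ path pushes the unavoidable extra out-arc onto $u$ within the bound. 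Propagating the slack property through the induction---in particular when the requested vertex $x$ lies in $\{u, v\}$ on the boundary of the removed face---requires a further case analysis on the interaction between $x$ and the removed face, and this is where the bulk of the technical work will live.
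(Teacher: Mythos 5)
Your overall strategy --- peel a leaf face of the weak dual, orient the new boundary path as a directed path, and push the one unavoidable extra out-arc onto an endpoint of the shared chord --- is the same skeleton as the paper's induction, but there are two problems, one cosmetic and one substantive. The cosmetic one: all the effort you devote to ${\rm diff}(D)$ is unnecessary, and as sketched it is not actually a proof. In an orientation of a bipartite graph every Eulerian sub-digraph decomposes into arc-disjoint directed cycles, each of even length, so every Eulerian sub-digraph has an even number of arcs; hence $\mathcal{E}_o(D)=\emptyset$ and ${\rm diff}(D)=|\mathcal{E}_e(D)|\ge 1$ for \emph{every} orientation $D$ (this is Lemma~\ref{lem-bipartite AT}). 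Your ``explicit computation with predictable signs'' is never carried out and need not be: the AT condition is free, and the entire content of the theorem is the out-degree bound.

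The substantive gap is that the strengthening ``slack at one prescribed vertex $x$ on demand'' is not inductively closed. In the inductive step for $G$ with target $x$, when $x\notin\{u,v\}$ and both $d_G(u),d_G(v)\ge 5$, you need an orientation of $G'$ with slack at $x$ \emph{and} slack at $u$ (or $v$) simultaneously; your hypothesis supplies only one of the two. Worse, after several ear removals the intermediate graph must carry slack at one chord-endpoint \emph{per} pending base edge, and these endpoints need not coincide, so no single-vertex (or any fixed-size) version of the hypothesis suffices; nor can you escape by choosing the leaf face cleverly, since there are outerplanar graphs in which every leaf face of the weak dual has both chord-endpoints of large degree. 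The repair is to replace the single vertex $x$ by a set $S$ of boundary edges equipped with an orientation $\vec S$ and to prove inductively that there is an orientation with $d^{+}_{D}(v)\le \min\{3- d_{\vec{S}}^+(v),\,d_{G}(v)-1 +d_{\vec{S}}^-(v)\}$ for every $v$: one unit of slack at the tail of every pending base edge, together with a compensating relaxation at the heads. That head-relaxation is absent from your proposal and is genuinely needed --- for instance for a degree-$2$ internal vertex of the removed path that carries an edge of $S$, and for the degenerate cases where the graph shrinks to a single cycle or to $K_2$. This bookkeeping is exactly Definition~\ref{def-orientation_bi} and Lemma~\ref{lem-main_bi} of the paper.
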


Alon and Tarsi \cite{MR1179249} proved that if $G$ is $f$-AT, then $G$ is $f$-choosable. 
  Schauz \cite{MR2515754} proved that if $G$ is $f$-AT, then $G$ is $f$-paintable. Therefore,
Theorems \ref{thm-main} and \ref{thm-bi} imply that 2-connected outerplanar graphs other than odd cycles are 5-truncated degree-choosable as well as 5-truncated degree-paintable,
 and 2-connected bipartite outerplanar graphs are 4-truncated degree-choosable as well as 4-truncated degree-paintable. 

  \begin{cor}
      If $G$ is a  2-connected  outerplanar graph and is not an odd cycle, then $G$ is 5-truncated degree-paintable. If $G$ is a  2-connected bipartite outerplanar graph, then $G$ is 4-truncated degree-paintable.
  \end{cor}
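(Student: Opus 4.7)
The plan is to derive the corollary as an immediate consequence of Theorems~\ref{thm-main} and~\ref{thm-bi} combined with Schauz's implication that every $f$-AT graph is $f$-paintable \cite{MR2515754}. Concretely, let $G$ be a 2-connected outerplanar graph that is not an odd cycle, and define $f\in\mathbb{N}^G$ by $f(v)=\min\{5,d_G(v)\}$. Theorem~\ref{thm-main} tells us that $G$ admits an $f$-AT orientation, so by Schauz's theorem Painter has a winning strategy in the $f$-painting game on $G$. By the definition of $k$-truncated degree-paintability, this is exactly the statement that $G$ is 5-truncated degree-paintable. Repeating the argument with $f(v)=\min\{4,d_G(v)\}$ and Theorem~\ref{thm-bi} in place of Theorem~\ref{thm-main} handles the 2-connected bipartite case.

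The only thing worth checking is that the truncation function $f(v)=\min\{k,d_G(v)\}$ used in the definition of $k$-truncated degree-AT is literally the same function used in the definition of $k$-truncated degree-paintable, which is clear from the definitions given in the excerpt. There is therefore no genuine obstacle: the corollary is a one-line application of Schauz's theorem, and I would write it up as such. All of the substantive work is carried out earlier, in the proofs of Theorems~\ref{thm-main} and~\ref{thm-bi}; the corollary is merely the online-colouring shadow of those AT-orientation results.
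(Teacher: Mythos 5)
Your proposal is correct and is exactly the argument the paper uses: the corollary follows by combining Theorems~\ref{thm-main} and~\ref{thm-bi} with Schauz's result that $f$-AT implies $f$-paintable, applied to the truncation function $f(v)=\min\{k,d_G(v)\}$. Nothing further is needed.
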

	 
Hutchinson's results are tight: There are maximal 2-connected outerplanar graphs other than triangles that are not 4-truncated degree-choosable, and there are bipartite outerplanar graphs that are not 3-truncated degree-choosable. Hence Theorem \ref{thm-main} and Theorem \ref{thm-bi} are also tight.

\section{{Proof of Theorem \ref{thm-main}}}

In our proofs, we need to consider arc-weighted AT-orientations of graphs.
  An arc-weighted orientation of a graph $G$ is a pair $(D,w)$, where $D$ is an orientation of $G$ and $w: E(D) \to \{1,2,\ldots \}$ is a weight function that assigns to each arc $e$ a positive integer $w(e)$ as its weight. The out-degree and in-degree of a vertex $v$ in $(D,w)$ is defined as $d_{(D,w)}^+(v) = \sum_{ e\in E^+(v)}w(e)$ and $d_{(D,w)}^-(v) = \sum_{ e\in E^-(v)}w(e)$. For a subset $H$ of arcs of $D$, $(D[H],w)$ is the arc-weighted sub-digraph induced by arcs in $H$. We say $(D[H],w)$ is an {\em Eulerian subdigraph  } if $d^{+}_{(D[H],w)}(v)=d^{-}_{(D[H],w)}(v)$ for each vertex $v$. 
  Similarly, let
  
  \begin{eqnarray*}
  \mathcal{E}(D,w)&=&\{H: (D[H],w) \text{  is an Eulerian subdigraph}\}, \\
  \mathcal{E}_e(D,w)&=&\{H\in \mathcal{E}(D,w) : |H| \text{ is even}\}, \text{  and }  \mathcal{E}_o(D,w)=\{H\in \mathcal{E}(D,w) : |H| \text{  is odd}\}, \\
  {\rm diff}(D,w) &=&|\mathcal{E}_e(D,w)|-|\mathcal{E}_o(D,w)|.
  \end{eqnarray*}
    We say $(D,w)$ is an arc-weighted AT-orientation if ${\rm diff}(D,w) \ne 0$. In \cite{MR4718223} it was shown that a graph $G$ is $f$-AT if and only if $G$ has an arc-weighted orientation $(D,w)$ such that $d_{(D,w)}^+(v) < f(v)$.

 Let $G$ be an outerplane graph, which is an outerplanar graph with fixed embedding. Our goal is to construct an arc-weighted $5$-truncated degree  AT-orientation $(D,w)$ of $G$. Given an arc-weighted orientation $(D,w)$, it is usually difficult to check whether $(D,w)$ is an AT-orientation. 
 
Assume $G$ is a 2-connected outplanar graph other than an odd cycle. We shall prove that $G$ has an arc-weighted AT-orientation $(D,w)$ with $d_{(D,w)}^+(v) \le \min \{4, d_G(v)-1\}$ for every vertex $v$.
We shall prove this result by using induction. As in many inductive proofs, we shall prove a stronger and more technical result. Indeed, the main difficulty in proving Theorem \ref{thm-main} is to formulate the ``right and more technical" statement to be proved by induction. Once the right statement is formulated, the proof is more or less routing checking.

We first define two terms needed for stating the more technical result. 

\begin{definition}
    \label{def-pairs}
    We denote by $\mathcal{Q}$ the set of pairs $(G,S)$ such that one of the following holds:
    \begin{enumerate}
        \item $G$ is   a 2-connected outerplanar graph and $S$ is a set of boundary edge of $G$. Moreover, if  $G$ is cycle, then $|S|\geq 1$.
        \item $G=K_2$,    $S$ consists of one or two parallel edges connecting the two vertices of $G$.
    \end{enumerate}
\end{definition}

 \begin{definition}
     \label{def-orientation}
     Assume $(G,S) \in \mathcal{Q}$. A {\em valid orientation} of $(G,S)$ is a pair $((D,w), \vec{S})$, where  $(D,w)$ is an arc weighted orientation of $G$, and   $\vec{S}$ is an orientation of $S$,  for which the following hold:
     \begin{enumerate}
         \item $|\mathcal{E}(D,w)|$ is odd.
         \item For each vertex $v$,
         $$d^{+}_{(D,w)}(v)\le \min\{4 - 2d_{\vec{S}}^+(v),d_{G}(v)-1 +d_{\vec{S}}^-(v) \}.$$
      \end{enumerate}
 \end{definition}

We shall prove that every $(G,S) \in \mathcal{Q}$ has a valid orientation $((D,w),\vec{S})$. 
Observe that Theorem \ref{thm-main} follows this result: If $G$ is an even cycle, then let $D$ be the orientation of $G$ that is a directed cycle. Otherwise, $G$ is not a cycle, and  a valid orientation $((D,w), \vec{S})$  with $S = \emptyset$ gives the required arc-weighted orientation of $G$. Note that if   $|\mathcal{E}(D,w)|$ is odd, then certainly $|\mathcal{E}_e(D,w)| \ne  |\mathcal{E}_o(D,w)|$, and hence $(D,w)$ is an AT-orientation. 

The set $\vec{S}$ of oriented boundary arcs of $G$ acts as a ``bank system".  Our ultimate goal is to find an arc-weighted AT-orientation $(D,w)$ of $G$ so that for each vertex $v$,
$$d^{+}_{(D,w)}(v)\le \min\{4,d_{G}(v)-1 \}.$$
If $uv \in S$, then it means that   $G$ is obtained from the original outerplanar graph by removing an ear-chain (to be defined later) with base edge $uv$.
If $(u,v) \in \vec{S}$, then 
the out-degree requirement for $u$ is more restrictive, and the out-degree requirement for $v$ is less restrictive. We can view it as $v$ ``borrowed" some ``out-degree restriction quota" from $u$. Eventually, the removed ear-chain with base edge $uv$ will be put back, and by orienting the edges in the ear-chain, $v$ will pay back the debts, and every vertex satisfies the requirement that  
$d^{+}_{(D,w)}(v)\le \min\{4,d_{G}(v)-1 \}.$

The conclusion that $|\mathcal{E}(D,w)|$ be odd is stronger than $(D,w)$ is an AT-orientation. We need this stronger property of the orientation to carry out the induction proof. 

\begin{lem}
    \label{lem-main}
    Every $(G,S) \in \mathcal{Q}$ admits a valid orientation.
\end{lem}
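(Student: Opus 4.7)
The plan is to prove Lemma \ref{lem-main} by induction on $|V(G)|+|E(G)|$. The base cases are $G=K_2$ and $G$ a cycle. For $G=K_2$ with $S$ consisting of one or two parallel edges between its two vertices, we orient $\vec{S}$ arbitrarily and orient the single edge of $G$ so that both inequalities of Definition \ref{def-orientation}(2) hold; the only Eulerian subdigraph is empty, giving $|\mathcal{E}(D,w)|=1$. For $G$ a cycle with $|S|\ge 1$, we pick $e_0=uv\in S$, orient $\vec{S}$ as $(u,v)$, and orient the cycle edges of $D$ so that $v$ is the unique source and $u$ the unique sink (i.e., two internally-disjoint directed paths from $v$ to $u$ along the cycle). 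Then $D$ is not a directed cycle, so the only Eulerian subdigraph is the empty one and $|\mathcal{E}(D,w)|=1$; the value $d^+(v)=2$ is absorbed by the credit $d^-_{\vec{S}}(v)=1$, while $u$'s tighter constraint is met by $d^+(u)=0$.

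For the inductive step, assume $G$ is not a cycle. Using the outerplane embedding, locate an \emph{ear}: a path $P=u\,w_1\,\ldots\,w_k\,v$, $k\ge 1$, on the outer boundary whose interior vertices have degree $2$ in $G$ and whose endpoints $u,v$ are joined by a chord of $G$. Such a $P$ exists because the weak dual of $G$ is a tree and any leaf face provides one. Let $G'=G-\{w_1,\ldots,w_k\}$ and $S'=(S\setminus E(P))\cup\{uv\}$. The more general \emph{ear-chain} notion foreshadowed in the paper bundles several consecutive ears so that $(G',S')$ always lies in $\mathcal{Q}$ (the residual is either a 2-connected non-cycle outerplanar graph, or a cycle with $|S'|\ge 1$, or $K_2$ with parallel edges in $S'$, matching Definition \ref{def-pairs}(2)). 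By induction, $(G',S')$ admits a valid orientation $((D',w'),\vec{S'})$.

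To extend to a valid orientation of $(G,S)$, we orient each arc of $P$, with appropriate weights, so that the direction of traversal of $P$ is compatible with the orientation of $uv$ in $\vec{S'}$. Because every interior $w_i$ of $P$ has $d_G(w_i)=2$, any Eulerian subdigraph of the extended $(D,w)$ either contains all arcs of $P$ or none of them. This gives a natural bijection between $\mathcal{E}(D,w)$ and Eulerian subdigraphs of an arc-weighted modification of $(D',w')$ in which the weight of the arc on $uv$ is shifted by $\pm 1$; a short parity computation then shows $|\mathcal{E}(D,w)|$ inherits oddness from $|\mathcal{E}(D',w')|$. For the degree inequalities, each $w_i$ receives out-degree $1$, satisfying $d^+_{(D,w)}(w_i)\le\min\{4,1\}=1$; the out-degrees at $u$ and $v$ change by at most $1$ each, and the debt/credit encoded by $\vec{S'}$ on $uv$ (which is precisely the motivation for Definition \ref{def-orientation}) is calibrated exactly to absorb the change.

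The main obstacle is achieving simultaneous control of the $|\mathcal{E}|$-parity and of both tight degree inequalities through a case split that handles: whether $uv\in E(G)$ was already a chord or was added virtually; whether $uv\in S$ before removal; the parity of $|E(P)|$; which of $u,v$ is the tail of the arc on $uv$ in $\vec{S'}$; and the degenerate case $G'=K_2$. Ensuring that a suitable ear-chain can always be chosen (possibly bundling several consecutive leaf-faces) so that $(G',S')\in\mathcal{Q}$ is the structural subtlety that carries the argument; once chosen, each case reduces to a routine verification on arc weights and on Eulerian-subdigraph parities.
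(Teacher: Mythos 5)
Your high-level plan coincides with the paper's (induction on an ear/ear-chain decomposition, with $\vec S$ as the debt/credit ledger), but the one place where you supply a new mechanism --- the parity of $|\mathcal{E}(D,w)|$ --- is exactly where the proposal breaks. After attaching the oriented path $\vec P$ from $u$ to $v$, you have
$|\mathcal{E}(D,w)| = |\mathcal{E}(D',w')| + N$, where $N$ is the number of subdigraphs $H'$ of $(D',w')$ for which $H'\cup \vec P$ is Eulerian, i.e.\ $H'$ is balanced everywhere except for one unit of excess at $u$ and $v$. Your claimed ``bijection with Eulerian subdigraphs of $(D',w')$ with the weight on $uv$ shifted by $\pm1$'' does not rescue this: the induction hypothesis asserts oddness of $|\mathcal{E}(D',w')|$ for the \emph{specific} weighting produced by induction, and says nothing about the parity of the Eulerian count of any modified weighting, so $N$ is simply uncontrolled. (Concretely, if $\vec P$ runs opposite to the chord arc on $uv$, then $\vec P$ plus that chord is already one new Eulerian subdigraph, and further ones depend on the internal structure of $D'$.) The paper avoids this by ensuring the removed piece is never a bare path: an ear-chain always contains a full ear $H_1$, whose two parallel directed paths give a fixed-point-free involution on $\mathcal{E}^{new}(D,w)$ (so it has even size), and in all other cases the orientation is perturbed (reversing $v_1v_2$, or giving $(u_{1,1},u_{1,2})$ weight $2$) precisely so that no arc of the removed piece lies on any Eulerian subdigraph. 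Neither device is available for a single ear oriented as one directed path, so the parity step is a genuine gap, not a routine verification.

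A second, smaller gap: the degree bookkeeping at the attachment vertices is asserted (``the debt/credit \ldots is calibrated exactly to absorb the change'') rather than checked, and it is not automatic. When the first edge $v_1v_2$ of the face is itself the root edge of an ear, $v_1$'s out-degree grows by $2$ under the canonical orientation while the released credit $4-2d^+_{\vec S'}(v_1) \to 4-2d^+_{\vec S}(v_1)$ only grows by $2$ if $d_G(v_1)-1+d^-_{\vec S}(v_1)$ also grows enough; the paper needs the weight-$2$ arc in Case 3 and the reversals in Cases 2 and 4 to make both branches of the $\min$ work, and interior vertices such as $u_{1,2}$ can end up with $d^+_{(D,w)}=d_G$ and survive only because $u_{1,1}u_{1,2}\in S$ supplies $d^-_{\vec S}\ge 1$. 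Your final paragraph correctly identifies that a case split on membership in $S$ and on which edges are root edges is needed, but since the choice of orientation and weights in each case is the actual content of the lemma, listing the cases without exhibiting the orientations leaves the proof incomplete.
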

\begin{proof} 
The proof is  by   induction on the number of vertices of $G$. 

Assume first that $G=K_2$ and $V(G)=\{u,v\}$.    Let $D$ be the orientation of $G$ consisting arc $(v,u)$, and let  $w((v,u))=1$. The only Eulerian subdigraph is the empty subdigraph. So $|\mathcal{E}(D,w)|$ is odd.

If $S$ consists of one edge $vu$, then let  $\vec{S}=\{(u,v)\}$.
If $S$ consists of two edges connecting $u$ and $v$, then 
  $\vec{S} $ consists of two copies of the arc $(u,v)$. 
  In any case,   $d^{+}_{\vec{S}}(u) \le  2$, $d^-_{\vec{S}}(u) = 0$ and $d^-_{\vec{S}}(v) \ge 1$. 
  
Hence 
 
    $$d^{+}_{(D,w)}(u)=0 \le 4-2 d^{+}_{\vec{S}}(u),  \ \text{ and } \  d^{+}_{(D,w)}(v) =1 \le d_G(v)-1 +  d^-_{\vec{S}}(v).$$ 

 Assume next that $G$ is a cycle $(v_1,v_2,\cdots,v_n)$. As $S \ne \emptyset$, we may assume $v_1v_n\in S$. Let $(D,w)$ be the arc-weighted orientation of $G$ with arcs  $(v_i,v_{i+1}) $ for $i=1,2,\cdots, n-1$ and $(v_1,v_n)$, and with all arcs having weight 1.

The only Eulerian subdigraph is the empty subdigraph. So $|\mathcal{E}(D,w)|$ is odd. 

The edge $v_1v_n \in S$ is oriented as $(v_n,v_1)$. For $i=1,2,\ldots, n-1$, if 
 $v_iv_{i+1}\in S$, then it is oriented as an arc $(v_{i+1},v_i)$ in $ \vec{S}$. 

Note that  $v_n$ is the only vertex which may have out-degree 2 in $\vec{S}$ and $d_{(D,w)}^+(v_n)=0$.  Every vertex $v$ has $d_{(D,w)}^+(v) \le 1 = d_G(v)-1$, except that $d_{(D,w)}^+(v_1) = d_G(v_1)=2$. 
As $d_{\vec{S}}(v_1) \ge 1$, we conclude that $((D,w),\vec{S})$ is a valid orientation of $(G,S)$.

Assume $(G,S) \in \mathcal{Q}$, $G \ne K_2$ and $G$ is not a cycle.

\begin{definition}
    An {\em ear} $H$ of $G$ is a cycle $(u_1, u_2,\cdots, u_r)$ such that $d_G(u_i)=2$ for $i \in \{2, 3, \cdots, r-1\}$.
    The edge $u_1u_r$ is the root edge of $H$.
    \end{definition}

    \begin{definition}
    An  {\em  ear-chain} $F$ of $G$ consists of an induced cycle $C=(v_1, v_2,\cdots, v_s)$ and a sequence   $H_1, H_2, \cdots , H_t$ of ears ($1\leq t \leq s-1$), such that for $1 \le i \le t$,  the root edge of $H_i$ is an edge $v_{j_i} v_{j_i+1}$ of the cycle $C$, where $1 \le j_1 < j_2 < \ldots < j_t < s$. Moreover, each vertex $v_i \ne v_1, v_s$ are not incident to any other edges of $G$.
    The edge $v_1v_s$ is the base edge of $F$.   
\end{definition}

For an ear-chain $F$ consisting a cycle $C=(v_1, v_2,\cdots, v_s)$ and ears $H_1, H_2, \cdots , H_t$, 
We denote by $$E(F) = E(C) \cup (\bigcup_{i=1}^tE(H_i)) - \{v_1v_s\}.$$

 \begin{figure}[H]
    \centering
    \includegraphics[width=0.7\textwidth]{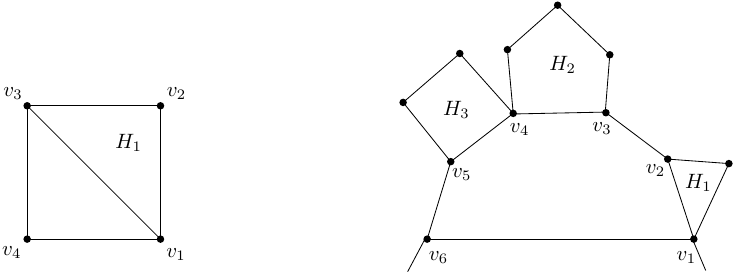}\label{fig:ex}
    \caption{Two example  ear-chains. }
\end{figure}

As $G$ is a 2-connected outerplanar graph and $G$ is not a cycle, it is easy to verify that $G$ contains an ear chain $F$. 

Assume $F$ is an ear chain in $G$.
Assume  $F$ consists an induced cycle $C=(v_1,v_2,\ldots, v_s)$ with base edge $v_1v_s$, and ears $H_1,H_2,\ldots, H_t$. For $j=1,2,\ldots, t$, $H_j$ is an induced cycle $ (u_{j,1},u_{j,2}, \ldots, u_{j, m_j})$ with root edge $u_{j,1}u_{j,m_j} = v_{i_j}v_{i_j+1}$, where $1 \le i_1 < i_2 < \ldots < i_t \le s-1$.
(Note that the two end vertices of the root edge of $H_j$ have two labels: $u_{j,1} = v_{i_j}$ and $u_{j, m_j} = v_{i_j+1}$. They are both vertices of the cycle $C$ and the ear $H_j$.) 
Thus
$$E(F) = \{v_iv_{i+1}: i =1,2,\ldots, s-1\} \cup \bigcup_{j=1}^t \{u_{j,l}u_{j,l+1}: l=1,2\ldots, m_j-1\}.$$

  Let $G' = G - (V(F) - \{v_1,v_s\})$, and  $S' = (S \cap E(G')) \cup   \{v_1v_s\}$.

By induction hypothesis, there is a valid  orientation $((D',w'), \vec{S}')$ of $(G', S')$. Without loss of generality,  we may assume $(v_1,v_s)\in \vec{S'}$.

It is possible that $G'=K_2$. In this case, $v_1v_s$ is a boundary edge of $G$ and possibly an edge in $S$. In this case, $S'$ consists of a pair of parallel edges connecting $v_1$ and $v_s$.

Let $S_F = S \cap E(F)$. We shall construct an arc-weighted orientation $(D_F, w_F)$ of $E(F)$ and an orientation $\vec{S}_F$ of $S_F$.
Let $(D,w) = (D',w') \cup (D_F, w_F)$ and $\vec{S} = \vec{S}' \cup \vec{S}_F - \{(v_1,v_s)\}$. Then we prove that $((D,w), \vec{S})$ is a valid orientation of $(G,S)$. 

First we define a {\em canonical orientation } $((D_F,w_F),\vec{S}_F)$ of $E(F)$ and $S_F$ as follows: 

 The arc set of $D_F$ is
   $$\{(v_i,v_{i+1}): i=1,2,\cdots, s-1\} \cup (\bigcup_{j=1}^t \{(u_{j,l},u_{j,l+1}): l=1,2,\ldots, m_j-1\})$$
   and each arc has weight 1. 

   The canonical orientation is illustrated in Figure \ref{figcase12}(a).

   In other words, all the paths (in $E(F)$) connecting $v_1$ and $v_s$  are oriented as   directed paths from $v_1$ to  $v_s$. 

In $\vec{S}_F$, all the edges   of $S_F$ are oriented in the same direction as in the canonical orientation in $D_F$. 
 
In all the cases below, $\vec{S}_F$ remains the same, and the orientations $ (D_F,w_F) $ will be slight modifications of the canonical orientation. 
To define $(D_F, w_F)$, we shall only specify which edges are oriented differently from the canonical orientation. 

To prove that $((D,w),\vec{S}) $ is a valid orientation of $(G,S)$, we need to show that $|\mathcal{E}(D,w)|$ is odd and all the vertices satisfy the out-degree constraint.

Note that $$\mathcal{E}((D',w') \subseteq \mathcal{E}(D,w).$$
Let
$$\mathcal{E}^{new}(D,w) = \mathcal{E}(D,w) - \mathcal{E}((D',w').$$
As $|\mathcal{E}((D',w')|$ is odd, it remains to prove that $|\mathcal{E}^{new}(D,w)| $ is even. 
 
For each vertex  $v \in V(G')$,
by induction hypothesis, 
\begin{equation}
\label{eq1}
    d^{+}_{(D',w')}(v)\le \min\{4 - 2d_{\vec{S}'}^+(v),d_{G'}(v)-1 +d_{\vec{S}'}^-(v) \}. 
\end{equation} 
We need to show that 
\begin{equation}
\label{eq2}
    d^{+}_{(D,w)}(v)\le \min\{4 - 2d_{\vec{S}}^+(v),d_{G}(v)-1 +d_{\vec{S}}^-(v) \}. 
\end{equation} 
For this purpose, we shall compare the differences on both sides of (\ref{eq1}) and (\ref{eq2}). Let  
\begin{eqnarray*}
    l(v) &=& d^{+}_{(D,w)}(v) - d^{+}_{(D',w')}(v),\\
    r(v) &=& \min\{4 - 2d_{\vec{S}}^+(v),d_{G}(v)-1 +d_{\vec{S}}^-(v) \} - \min\{4 - 2d_{\vec{S}'}^+(v),d_{G'}(v)-1 +d_{\vec{S}'}^-(v) \}.
\end{eqnarray*}
  
To prove that (\ref{eq2}) holds, 
it suffices to show that $l(v) \le r(v)$. For $v \in V(G') - V(F)$, we have $l(v)=r(v)=0$. Note that 
$V(G') \cap V(F)=\{v_1, v_s\}$. In all the cases below, it follows directly from the definition that $l(v_s)=0$. As $d_{\vec{S}}^+(v_s) = d_{\vec{S'}}^+(v_s)$, 
$d_{\vec{S}}^-(v_s) = d_{\vec{S'}}^-(v_s) -1 $ and $d_G(v_s) \ge d_{G'}(v_s)+1$, we conclude that $r(v_s) \ge 0$. Therefore to prove that $((D,w),\vec{S})$ is a valid orientation of $(G,S)$, it suffices to show the following:
\begin{enumerate}
    \item[(1)] $|\mathcal{E}^{new}(D,w)|$ is even.
    \item[(2)] $l(v_1) \le r(v_1)$.
    \item[(3)] For $v \in V(F) - \{v_1, v_s\}$, $$d^{+}_{(D,w)}(v)\le \min\{4 - 2d_{\vec{S}}^+(v),d_{G}(v)-1 +d_{\vec{S}}^-(v) \}. $$
\end{enumerate}

\begin{case}  
     $v_1v_2 $ is not the root edge of $H_1$ and  $v_1v_2 \notin S$. 

   In this case, $(D_F,w_F), \vec{S})$ is exactly the canonical orientation of $E(F)$ and $S_F$.

(1) By definition, $F$ contains at least one ear $H_1$. The edges of $H_1$ are oriented as two directed paths, say $P_1 = (u_{1,1}, u_{1, m_1})$ and $P_2 = (u_{1,1},u_{1,2}, \ldots, u_{1,m_1})$, from $u_{1,1}$ to $u_{1, m_1}$. 
Note that any Eulerian sub-digraph $H \in \mathcal{E}^{new}(D,w)$ contain the arc $(v_1,v_2)$.
Hence $H$ contains exactly one of the two directed paths. If $H \in \mathcal{E}^{new}(D,w)$ contains $P_1$, then replace $P_1$ by $P_2$, we obtain another Eulerian sub-digraph $H' \in \mathcal{E}^{new}(D,w)$, and vice versa. Therefore elements of  $\mathcal{E}^{new}(D,w)$ come in pairs, and $|\mathcal{E}^{new}(D,w)|$ is even.

(2) It follows from definition that $l(v_1)=1$. 
   As $d_G(v_1)=d_{G'}(v_1)+1$, 
$d_{\vec{S}}^+(v_1) = d_{\vec{S}'}^+(v_1)-1$, and 
$d_{\vec{S}}^-(v_1) = d_{\vec{S}'}^-(v_1)$, we conclude that $r(v_1) \ge 1$. So $l(v_1) \le r(v_1)$.

(3) For each vertex   $v\in  V(F)-\{v_1,v_s\}$, 
$d_{\vec{S}}^+(v) \le 1$ and 
$d_{(D,w)}^+(v) \le 2$. 
Hence $d_{(D,w)}^+(v) \le 4 - 2 d_{\vec{S}}^+(v)$.
On the other hand, $d_{(D,w)}^-(v) \ge 1$, and hence 
 $d_{(D,w)}^+(v) \le d_{G}(v)-1$.
Thus $d^{+}_{(D,w)}(v)\le \min\{4 - 2d_{\vec{S}}^+(v),d_{G}(v)-1 +d_{\vec{S}}^-(v) \}.$

This completes the proof of Case 1. 
\end{case}

\begin{case}
     $v_1v_2 $ is not the root edge of $H_1$ and  $v_1v_2 \in S$. 
\begin{subcase}
    $v_2v_3$ is a root edge of $H_1$ and $u_{1,1}u_{1,2} \in S$

    In this case,  $(D_F,w_F)$  is obtained from the canonical orientation by reversing the directions of $v_1v_2$ and $u_{1,1}u_{1,2}$, and these two arcs still have weight 1. This orientation is illustrated in   Figure \ref{figcase12} (b).  

   (1)  Observe that any arc in an Eulerian subdigraph is contained in a directed cycle. As no arc in $E(F)$ is contained in a  directed cycle, $\mathcal{E}^{new}(D,w) = \emptyset$ (and hence $|\mathcal{E}^{new}(D,w)| $ 
    is even).

    (2)  It follows from definition that $l(v_1)=0$. 
      As $d_G(v_1)=d_{G'}(v_1)+1$, $d_{\vec{S}}^+(v_1) = d_{\vec{S}'}^+(v_1)$, and $d_{\vec{S}}^-(v_1) = d_{\vec{S}'}^-(v_1)$, we conclude that $r(v_1) \ge 0$. So $l(v_1) \le r(v_1)$.

    (3)    For every vertex $v\in (V(F)-\{v_1,v_s\})$,   
       $d_{\vec{S}}^+(v) \le 1$ and $d_{(D,w)}^+(v) \le 2$. Hence $d_{(D,w)}^+(v) \le 4 - 2 d_{\vec{S}}^+(v)$. 

       For every vertex $v\in (V(F)-\{v_1,v_s, u_{1,2}\})$,    $d^{+}_{(D,w)}(v)\le d_{G}(v)-1$. As  
       $d_{\vec{S}}^-(u_{1,2}) \ge 1$ and $d_{(D,w)}^+(u_{1,2}) =d_G(u_{1,2})$, we conclude that for every vertex $v\in (V(F)-{v_1,v_s})$, 
       $$d^{+}_{(D,w)}(v)\le \min\{4 - 2d_{\vec{S}}^+(v),d_{G}(v)-1 +d_{\vec{S}}^-(v) \}.$$
   
    This completes the proof of Subcase 2.1.
            
   \end{subcase}
  \begin{subcase}
    $v_2v_3$ is not the root edge of $H_1$ or $v_2v_3$ is the root edge of $H_1$ and $u_{1,1}u_{1,2} \notin S$.

    In this case, $(D_F,w_F)$ is obtained from the canonical orientation by reversing the direction of edge $v_1v_2$, and this arc has  weight 1.

(1) Again, all the arcs in $E(F)$ are not contained in any directed cycles, and hence $\mathcal{E}^{new}(D,w) = \emptyset$.

 (2)  It follows from definition that $l(v_1)=0$. 
      As $d_G(v_1)=d_{G'}(v_1)+1$, $d_{\vec{S}}^+(v_1) = d_{\vec{S}'}^+(v_1)$, and $d_{\vec{S}}^-(v_1) = d_{\vec{S}'}^-(v_1)$, we conclude that $r(v_1) \ge 0$. So $l(v_1) \le r(v_1)$.

 (3)    For  $v\in (V(F)-\{v_1,v_2, v_s\})$, $d_{(D,w)}^+(v) \le \min\{2,  d_{G}(v)-1\}$ and $d_{\vec{S}}^+(v) \le 1$. Hence
     $d_{(D,w)}^+(v) \le  \min\{4 - 2d_{\vec{S}}^+(v),d_{G}(v)-1 +d_{\vec{S}}^-(v) \}.$ 

     If $v_2v_3$ is not the root edge of $H_1$, then for the same reason, $d_{(D,w)}^+(v_2) \le  \min\{4 - 2d_{\vec{S}}^+(v_2),d_{G}(v_2)-1 +d_{\vec{S}}^-(v_2) \}.$
     
    If $v_2v_3$ is the root edge of $H_1$, then   $d^{+}_{(D,w)}(v_2)=3$ and $d_{\vec{S}}^+(v_2)=0$. Hence we also have $d_{(D,w)}^+(v_2) \le  \min\{4 - 2d_{\vec{S}}^+(v_2),d_{G}(v_2)-1 +d_{\vec{S}}^-(v_2) \}.$

     This completes the proof of Subcase 2.2.
   \end{subcase}
 \end{case}
 
 \begin{figure}[H]
    \centering
    \includegraphics[width=1\textwidth]{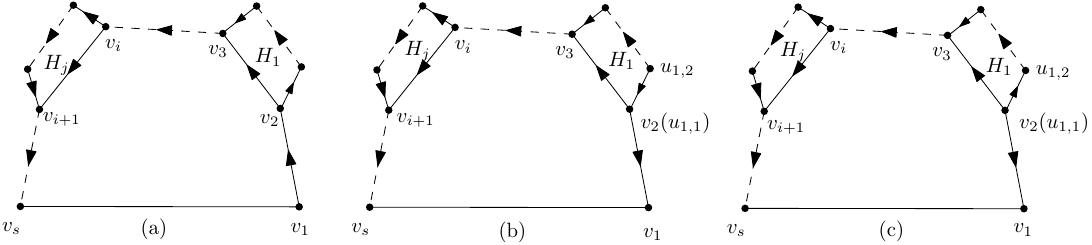}\label{fig:ot1}
    \caption{Orientations $D_F$ for Case 1 and Case 2. }
    \label{figcase12}
\end{figure}

\begin{case} 
     $v_1v_2 $ is the root edge of ear $H_1 $ and $u_{1,1}u_{1,2} \notin S$.  
    \begin{subcase} 
        $v_2v_3$ is the root edge of $H_2$ and $u_{2,1}u_{2,2} \in S$.

       In this case,  $(D_F,w_F)$ is obtained  from the canonical orientation by reversing the direction of edges    $v_1v_2$ and $u_{2,1}u_{2,2}$, and also change the weight of $(u_{1,1}, u_{1,2})$ to $2$. The other arcs have weight 1. See Figure \ref{figcase3} (a).
       
(1)       The arcs  $(u_{1,1}, u_{1,2})$  is not contained in any Eulerian subdigraph, as $w((u_{1,1}, u_{1,2}))=2$ and $d_{(D,w)}^+(u_{1,2})=1$. No arc in $D_F - \{(u_{1,1}, u_{1,2})\}$ is   contained in a  directed cycle  of $(D, w)-\{(u_{1,1},u_{1,2})\}$. Hence $\mathcal{E}^{new}(D,w) = \emptyset$.
       
(2)  It follows from definition that $l(v_1)=2$ . 
        As $d_G(v_1)=d_{G'}(v_1)+2$, $d_{\vec{S}}^+(v_1) = d_{\vec{S}'}^+(v_1)-1$, and $d_{\vec{S}}^-(v_1) = d_{\vec{S}'}^-(v_1)$, we conclude that $r(v_1) \ge 2$.  So $l(v_1) \le r(v_1)$.
   
  (3)     For every vertex $v\in V(F)-\{v_1,v_s\}$,  
       $d_{\vec{S}}^+(v) \le 1$ and $d_{(D,w)}^+(v) \le 2$. Hence $d_{(D,w)}^+(v) \le 4 - 2 d_{\vec{S}}^+(v)$. 
       
       For every vertex $v\in V(F)-\{v_1,v_s, u_{2,2}\}$,    $d^{+}_{(D,w)}(v)\le d_{G}(v)-1$. As  
       $d_{\vec{S}}^-(u_{2,2}) \ge 1$ and $d_{(D,w)}^+(u_{2,2}) =d_G(u_{2,2})$, we conclude that for every vertex $v\in V(F)-\{v_1,v_s\}$, 
       $$d^{+}_{(D,w)}(v)\le \min\{4 - 2d_{\vec{S}}^+(v),d_{G}(v)-1 +d_{\vec{S}}^-(v) \}.$$

       This completes the proof of Subcase 3.1.
         
    \end{subcase} 
    \begin{subcase} 
        $v_2v_3$ is not the root edge of $H_2$, or
        $v_2v_3$ is the root edge of $H_2$ and $u_{2,1}u_{2,2} \notin S$.

         In this case,  $(D_F,w_F)$ is obtained  from the canonical orientation by reversing the direction of edge    $v_1v_2$, and also change the weight of $(u_{1,1}, u_{1,2})$ to $2$. The other arcs have weight 1. See Figure \ref{figcase3} (a).
         
     (1)    Again, the arcs  $(u_{1,1}, u_{1,2})$  is not contained in any Eulerian subdigraph, and 
 no arc in $D_F - \{(u_{1,1}, u_{1,2)}\}$ is   contained in a directed cycle  of $(D, w)-\{(u_{1,1},u_{1,2})\}$. Hence $\mathcal{E}^{new}(D,w) = \emptyset$.

(2)   It follows from the definition that $l(v_1)=2$. 
        As $d_G(v_1)=d_{G'}(v_1)+2$, $d_{\vec{S}}^+(v_1) = d_{\vec{S}'}^+(v_1)-1$, and $d_{\vec{S}}^-(v_1) = d_{\vec{S}'}^-(v_1)$, we conclude that $r(v_1) \ge 2$. So $l(v_1) \le r(v_1)$.

        (3) 
      For  $v\in (V(F)-\{v_1,v_2, v_s\})$, $d_{(D,w)}^+(v) \le \min\{2,  d_{G}(v)-1\}$ and $d_{\vec{S}}^+(v) \le 1$. Hence
      $d_{(D,w)}^+(v) \le  \min\{4 - 2d_{\vec{S}}^+(v),d_{G}(v)-1 +d_{\vec{S}}^-(v) \}.$ 

      If $v_2v_3$ is not the root edge of $H_2$, then for the same reason, $d_{(D,w)}^+(v_2) \le  \min\{4 - 2d_{\vec{S}}^+(v_2),d_{G}(v_2)-1 +d_{\vec{S}}^-(v_2) \}.$
     
      If $v_2v_3$ is the root edge of $H_2$, then   $d^{+}_{(D,w)}(v_2)=3$ and $d_{\vec{S}}^+(v_2)=0$. Hence we also have $d_{(D,w)}^+(v_2) \le  \min\{4 - 2d_{\vec{S}}^+(v_2),d_{G}(v_2)-1 +d_{\vec{S}}^-(v_2) \}.$

        This completes the proof of Subcase 3.2
         
    \end{subcase} 
\end{case} 
\begin{figure}[H]
    \centering
    \includegraphics[width=1\textwidth]{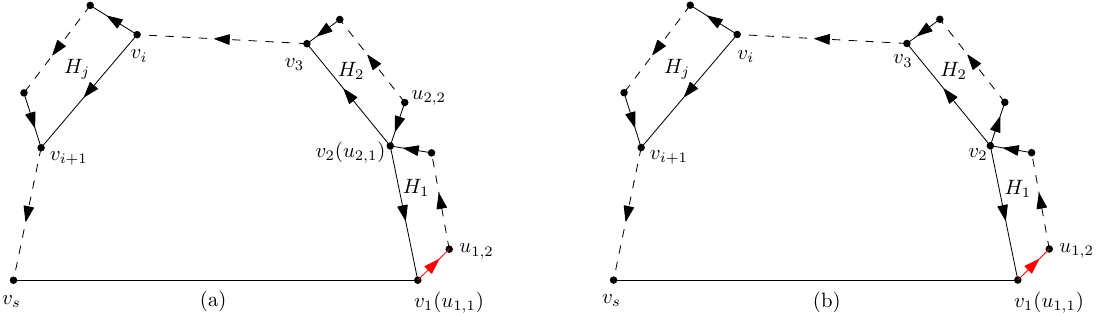}
    \caption{\label{fig:ot2} Orientations $D_F$ for Case 3. A red arc indicates an arc of weight 2.}
    \label{figcase3}
\end{figure}

\begin{case} 
     $v_1v_2 $ is the root edge of ear $H_1 $ and $u_{1,1}u_{1,2} \in S$.
      \begin{subcase} 
        $v_2v_3$ is the root edge of $H_2$ and $u_{2,1}u_{2,2} \in S$.

        In this case,  $(D_F,w_F)$  is obtained from the canonical orientation by reversing the directions of $v_1v_2$, $u_{1,1}u_{1,2}$ and $u_{2,1}u_{2,2}$ and these three arcs still have weight 1. This orientation is illustrated in   Figure \ref{figcase4} (a).  
        
      (1) As no arc in $E(F)$ is contained in a  directed cycle, $\mathcal{E}^{new}(D,w) = \emptyset$.

      (2) It follows from definition that $l(v_1)=0$. 
      As $d_G(v_1)=d_{G'}(v_1)+2$, $d_{\vec{S}}^+(v_1) = d_{\vec{S}'}^+(v_1)$, and $d_{\vec{S}}^-(v_1) = d_{\vec{S}'}^-(v_1)$, we conclude that $r(v_1) \ge 0$. 
   
     (3) For every vertex $v\in (V(F)-\{v_1,v_s, u_{1,2},u_{2,2}\})$, we have 
       $d_{\vec{S}}^+(v) \le 1$ and $d_{(D,w)}^+(v) \le \min\{2,d_{G}(v)-1\}$. Hence   $$d^{+}_{(D,w)}(v)\le \min\{4 - 2d_{\vec{S}}^+(v),d_{G}(v)-1 +d_{\vec{S}}^-(v) \}.$$
      
       For every vertex $v\in\{u_{1,2},u_{2,2}\}$, $d^{+}_{(D,w)}(v)= d_{G}(v)=2$ and $d_{\vec{S}}^-(v) \ge 1$ . Hence   $d^{+}_{(D,w)}(v)\le \min\{4 - 2d_{\vec{S}}^+(v),d_{G}(v)-1 +d_{\vec{S}}^-(v) \}.$

        This completes the proof of Subcase 4.1.

    \end{subcase} 
    \begin{subcase} 
        $v_2v_3$ is not the root edge of $H_2$, or
        $v_2v_3$ is the root edge of $H_2$ and $u_{2,1}u_{2,2} \notin S$. 

       In this case,  $(D_F,w_F)$  is obtained from the canonical orientation by reversing the directions of $v_1v_2$ and $u_{1,1}u_{1,2}$ and these two arcs still have weight 1. This orientation is illustrated in   Figure \ref{figcase4} (b). 

        (1) As no arc in $E(F)$ is contained in a  directed cycle, $\mathcal{E}^{new}(D,w) = \emptyset$.

         (2) It follows from definition that $l(v_1)=0$ . 
      As $d_G(v_1)=d_{G'}(v_1)+2$, $d_{\vec{S}}^+(v_1) = d_{\vec{S}'}^+(v_1)$, and $d_{\vec{S}}^-(v_1) = d_{\vec{S}'}^-(v_1)$, we conclude that $r(v_1) \ge 0$. 

      (3) For  $v\in (V(F)-\{v_1,v_2,u_{1,2}, v_s\})$, $d_{(D,w)}^+(v) \le \min\{2,  d_{G}(v)-1\}$ and $d_{\vec{S}}^+(v) \le 1$.   Hence
      $d_{(D,w)}^+(v) \le  \min\{4 - 2d_{\vec{S}}^+(v),d_{G}(v)-1 +d_{\vec{S}}^-(v) \}.$ 
      
      For $u_{1,2}$, $d_{(D,w)}^+(u_{1,2})=2$, $d_{\vec{S}}^+(u_{1,2}) \le 1$ and $d_{\vec{S}}^-(u_{1,2})\ge 1$. Then   
      $d_{(D,w)}^+(u_{1,2}) \le  \min\{4 - 2d_{\vec{S}}^+(u_{1,2}),d_{G}(u_{1,2})-1 +d_{\vec{S}}^-(u_{1,2}) \}.$

      If $v_2v_3$ is not the root edge of $H_2$, then for the same reason, $d_{(D,w)}^+(v_2) \le  \min\{4 - 2d_{\vec{S}}^+(v_2),d_{G}(v_2)-1 +d_{\vec{S}}^-(v_2) \}.$
     
      If $v_2v_3$ is the root edge of $H_2$, then   $d^{+}_{(D,w)}(v_2)=3$ and $d_{\vec{S}}^+(v_2)=0$. Hence we also have $d_{(D,w)}^+(v_2) \le  \min\{4 - 2d_{\vec{S}}^+(v_2),d_{G}(v_2)-1 +d_{\vec{S}}^-(v_2) \}.$
      
      This completes the proof of Subcase 4.2

    \end{subcase}  
\end{case} 
\end{proof}

\begin{figure}[H]
    \centering
    \includegraphics[width=1\textwidth]{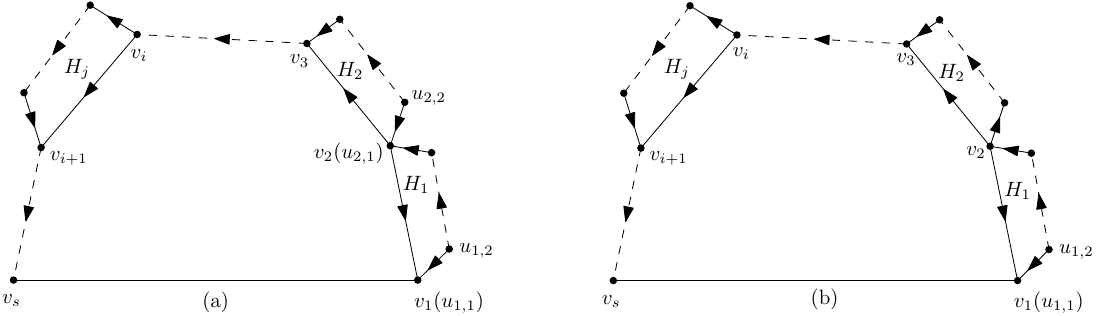}
    \caption{\label{fig:ot2} Orientations $D_F$ for Case 4. }
    \label{figcase4}
\end{figure}

\section{{Proof of Theorem \ref{thm-bi}}}

\begin{definition}
    \label{def-pairs_bi}
    We denote by $\mathcal{B}$ the set of pairs $(G,S)$ such that one of the following holds:
    \begin{enumerate}
        \item $G$ is   a 2-connected bipartite outerplanar graph and $S$ is a set of boundary edge of $G$, where $S$ maybe empty set.
        \item $G=K_2$,    $S$ consists of one or two parallel edges connecting the two vertices of $G$.
    \end{enumerate}
\end{definition}

\begin{definition}
     \label{def-orientation_bi}
     Assume $(G,S) \in \mathcal{B}$. A {\em valid orientation} of $(G,S)$ is a pair $(D, \vec{S})$, where $D$ is an orientation of $G$, and $\vec{S}$ is an orientation of $S$,  for which the following hold:

      For each vertex $v$,
      $$d^{+}_{D}(v)\le \min\{3- d_{\vec{S}}^+(v),d_{G}(v)-1 +d_{\vec{S}}^-(v) \}. $$
     
 \end{definition}

\begin{lem}
    \label{lem-bipartite AT}
    If $G$ is a bipartite graph, then every orientation $D$ of $G$ in an AT-orientation.
\end{lem}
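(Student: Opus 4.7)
The plan is to prove this via a simple parity argument, showing that for a bipartite graph every Eulerian sub-digraph automatically has even cardinality, which forces $\mathcal{E}_o(D)=\emptyset$ and hence ${\rm diff}(D) \ne 0$.

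First I would recall that an Eulerian sub-digraph $D[H]$ satisfies $d^+_{D[H]}(v)=d^-_{D[H]}(v)$ at every vertex, so the arc set $H$ decomposes into arc-disjoint directed cycles of $D$. (This is the standard decomposition: repeatedly peel off a directed cycle in $D[H]$, which exists because every vertex has equal in- and out-degree, and the remaining digraph is still Eulerian.) This reduces the size question to one about the individual directed cycles composing $H$.

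Next I would use the bipartiteness of $G$: every cycle of $G$ has even length, and therefore every directed cycle of any orientation $D$ also has even length. Consequently, if $H$ decomposes into directed cycles $C_1,\dots,C_k$, we have $|H|=\sum_{i=1}^k |C_i|$, a sum of even numbers, so $|H|$ is even. This gives $\mathcal{E}_o(D)=\emptyset$ for every orientation $D$ of $G$.

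Finally, since the empty sub-digraph $H=\emptyset$ trivially satisfies the Eulerian condition (both in- and out-degree are zero at every vertex) and has $|H|=0$ even, $\mathcal{E}_e(D)$ is nonempty; hence
$${\rm diff}(D) = |\mathcal{E}_e(D)|-|\mathcal{E}_o(D)| = |\mathcal{E}_e(D)| \ge 1 \ne 0,$$
so $D$ is an AT-orientation. There is no real obstacle here; the only point to be careful about is to invoke the cycle decomposition of Eulerian digraphs correctly rather than trying to argue directly from $d^+ = d^-$.
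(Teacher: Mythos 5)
Your proof is correct. Note that the paper states Lemma \ref{lem-bipartite AT} without any proof at all, so there is no argument to compare against; your reasoning (decompose an Eulerian sub-digraph into arc-disjoint directed cycles, observe that each directed cycle in an orientation of a bipartite graph has even length, conclude $\mathcal{E}_o(D)=\emptyset$ while the empty sub-digraph guarantees $|\mathcal{E}_e(D)|\ge 1$) is exactly the standard justification the authors are implicitly relying on, and it supplies the missing details cleanly.
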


\begin{lem}
    \label{lem-main_bi}
    Any $(G,S) \in \mathcal{B}$ admits a valid orientation.
\end{lem}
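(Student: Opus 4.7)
The plan is to prove Lemma~\ref{lem-main_bi} by induction on $|V(G)|$, following the same inductive framework as the proof of Lemma~\ref{lem-main}. Two simplifications become available through Lemma~\ref{lem-bipartite AT}: since every orientation of a bipartite graph is automatically an AT-orientation, the parity of $|\mathcal{E}(D,w)|$ no longer needs to be tracked through the recursion, and plain (unweighted) orientations suffice. The sole invariant to maintain is the out-degree inequality
\[
d^+_D(v) \le \min\{3 - d^+_{\vec{S}}(v),\; d_G(v) - 1 + d^-_{\vec{S}}(v)\},
\]
whose first coordinate is the tighter $3 - d^+_{\vec{S}}(v)$ in place of the non-bipartite $4 - 2d^+_{\vec{S}}(v)$.

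The base cases mirror those of Lemma~\ref{lem-main}. When $G = K_2$ with $V(G) = \{u,v\}$, orient its edge as $(v,u)$ and every edge of $S$ as $(u,v)$; the bounds are immediate from $d^+_D(u) = 0$ and $d^-_{\vec{S}}(v) \ge 1$. When $G$ is an even cycle, orient $G$ as a directed cycle and orient each edge of $S$ in the same direction, so that $d^+_D(v) = 1 = d_G(v) - 1$ and $1 \le 3 - d^+_{\vec{S}}(v)$ hold at every vertex. Note that $S = \emptyset$ is now admissible, unlike in Lemma~\ref{lem-main}, and causes no difficulty.

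For the inductive step, pick an ear-chain $F$ of $G$ with base edge $v_1 v_s$, set $G' = G - (V(F) \setminus \{v_1, v_s\})$ and $S' = (S \cap E(G')) \cup \{v_1 v_s\}$, and apply the induction hypothesis to obtain $(D', \vec{S}')$ with $(v_1, v_s) \in \vec{S}'$. Build $(D, \vec{S})$ by adjoining an orientation $(D_F, \vec{S}_F)$ of $F$ based on the canonical template---every arc of $F$ directed away from $v_1$ toward $v_s$ along the cycle $C$ and along each ear $H_j$, with $\vec{S}_F$ matching $D_F$ on $S \cap E(F)$---and by removing $(v_1, v_s)$ from $\vec{S}'$. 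Writing $l(v) := d^+_D(v) - d^+_{D'}(v)$ and $r(v) := $ the corresponding change in the right-hand side of the invariant, the verification reduces to showing $l(v_1) \le r(v_1)$ (with $l(v_s) = 0 \le r(v_s)$ handled directly from the canonical) and the absolute bound at every $v \in V(F) \setminus \{v_1, v_s\}$.

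The case analysis parallels Cases~1--4 of Lemma~\ref{lem-main}, driven by whether $v_1 v_2$ is the root edge of $H_1$ and whether $v_1 v_2 \in S$, with sub-splits based on the analogous status of $v_2 v_3$ and $u_{1,1} u_{1,2}$. The main obstacle is the tighter first-coordinate bound: when $v_1 v_2$ is the root of $H_1$, one computes $r(v_1) \ge 1$ rather than the $\ge 2$ available in the non-bipartite setting, while the canonical orientation contributes $l(v_1) = 2$. The natural remedy is to reverse the arc along $v_1 v_2$ so that $l(v_1) = 1$, shifting one unit of out-degree onto $v_2$; this is absorbed because every ear in a bipartite graph has length at least $4$, giving $d_G(v_2) \ge 3$ (or $\ge 4$ when $v_2 v_3$ is also an ear root). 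Subcases in which $v_2$ would otherwise be overloaded are relieved by an analogous reversal along the second ear, paid for by the $d^-_{\vec{S}}$ slack at the corresponding internal ear vertex. Checking the remaining interior vertices is routine, since internal ear vertices and non-root cycle vertices have $d_G = 2$ and carry at most one outgoing arc in each modified orientation, completing the induction.
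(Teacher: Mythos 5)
Your overall plan is sound and the two simplifications you extract from Lemma~\ref{lem-bipartite AT} (no parity bookkeeping, no arc weights) are exactly the right ones, but you take a genuinely heavier route than the paper. The paper exploits the disappearance of the parity constraint more aggressively: since $|\mathcal{E}(D)|$ no longer needs to stay odd, there is no reason to remove a whole ear-chain (whose only purpose in Lemma~\ref{lem-main} is to make the number of $v_1$--$v_s$ directed paths in $E(F)$ equal to $2^t$, hence even, so that new Eulerian subdigraphs pair up). Instead the paper peels off a \emph{single} inner face $(v_1,\dots,v_{2i})$ all of whose internal vertices have degree $2$, orients it as a directed path from $v_1$ to $v_{2i}$, and reverses the first arc exactly when $v_1v_2\in S$. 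The entire case analysis collapses to those two cases; ear-chains, weight-$2$ arcs, and the four cases with subcases all vanish. Your version, by contrast, must re-run the full Case~1--4 analysis with extra reversals to absorb the drop from $4-2d^+_{\vec S}$ to $3-d^+_{\vec S}$. I believe your route can be completed, but one point in your sketch is stated too loosely to stand as is: in the root-edge case you assert $r(v_1)\ge 1$ and propose reversing only $v_1v_2$, yet when $u_{1,1}u_{1,2}\in S$ the canonical orientation of $\vec S_F$ adds one to $d^+_{\vec S}(v_1)$, cancelling the gain from deleting $(v_1,v_s)$, so the first coordinate of the minimum does not improve and $r(v_1)$ may equal $0$; there you must also reverse $u_{1,1}u_{1,2}$ (as in Case~4 of Lemma~\ref{lem-main}) to get $l(v_1)=0$, paying at $u_{1,2}$ with its $d^-_{\vec S}$ slack. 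With that subcase made explicit, and the analogous reversal along $H_2$ that you already describe for relieving $v_2$, your argument goes through; it simply buys the same theorem at the cost of a much longer verification than the single-face induction.
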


Observe that Theorem \ref{thm-bi} follows from Lemma \ref{lem-main_bi} and Lemma \ref{lem-bipartite AT}: Apply Lemma \ref{lem-main_bi} with $S = \emptyset$. 

\begin{proof}

We prove Lemma \ref{lem-main_bi} by   induction on the faces of $G$.

Assume first that $G=K_2$ and $V(G)=\{u,v\}$.    Let $D$ be the orientation of $G$ consisting arc $(v,u)$ The only Eulerian subdigraph is the empty subdigraph. So $D$ is an AT-orientation.

If $S$ consists of one edge $vu$, then let  $\vec{S}=\{(u,v)\}$.
If $S$ consists of two edges connecting $u$ and $v$, then $\vec{S} $ consists of two copies of the arc $(u,v)$.

In any case, $d^{+}_{\vec{S}}(u)\le  2$, $d^-_{\vec{S}}(u) = 0$ and $d^-_{\vec{S}}(v) \ge 1$. As $d^{+}_{D}(u)=0$ and $d^{+}_{D}(v)=d_G(v)=1$, $(D, \vec{S})$ is a valid orientation of $(G,S)$.

We may assume that lemma hold for $|f(G)|<n$. Then we consider $|f(G)|=n$, and  $S$ is a set of boundary edge of $G$. $G$ must contain an ear, which is a $2i$-sided face, $i\geq2$, $(v_1,v_2,\cdots,v_{2i})$, where $d(v_j)=2$ for $j=2,\cdots,2i-1$, and $d(v_1),d(v_{2i})\geq3$. 

Let $S_f=S\cap \{v_1v_2,v_2v_3,\cdots,v_{2i-1}v_{2i}\}$, $G':=G-\{v_2,\cdots,v_{2i-1}\}$, $S':=S-S_f+\{v_1v_{2i}\}$. So $S'$ is a set of boundary edge of $G'$ and $f(G')<n$, by induction hypothesis, there is a valid orientation $(D',\vec{S}')$ of $(G',S')$. Since $v_1v_{2i} \in S'$ , so we may assume $(v_1,v_{2i})\in \vec{S'}$. 

For each vertex  $v \in V(G')$,
by induction hypothesis, 
\begin{equation}
\label{eq3}
    d^{+}_{D'}(v)\le \min\{3 - d_{\vec{S}'}^+(v),d_{G'}(v)-1 +d_{\vec{S}'}^-(v) \}. 
\end{equation} 
We need to show that 
\begin{equation}
\label{eq4}
    d^{+}_{D}(v)\le \min\{3 - d_{\vec{S}}^+(v),d_{G}(v)-1 +d_{\vec{S}}^-(v) \}. 
\end{equation} 
For this purpose, we shall compare the differences on both sides of (\ref{eq3}) and (\ref{eq4}). Let  
\begin{eqnarray*}
    l(v) &=& d^{+}_{D}(v) - d^{+}_{D'}(v),\\
    r(v) &=& \min\{ 3- d_{\vec{S}}^+(v),d_{G}(v)-1 +d_{\vec{S}}^-(v) \} - \min\{3- d_{\vec{S}'}^+(v),d_{G'}(v)-1 +d_{\vec{S}'}^-(v) \}.
\end{eqnarray*}
  
To prove that (\ref{eq4}) holds, 
it suffices to show that $l(v) \le r(v)$.

And again, it suffices to show the following:
\begin{enumerate}
    \item For $v\in\{v_1,v_{2i}\}$, $l(v)\le r(v)$.
    \item For $v\in\{v_2,v_3,\cdots,v_{2i-1}\}$, $$d^{+}_{D}(v)\le \min\{3 - d_{\vec{S}}^+(v),d_{G}(v)-1 +d_{\vec{S}}^-(v) \}.$$  
\end{enumerate}

   If $v_1v_2\notin S$, Let $D:=D'\cup \bigcup_{j=1}^{2i-1}(v_j,v_{j+1})$ .
  For $j=2,\ldots, 2i-1$, if $v_jv_{j+1}\in S_f$, then it is oriented as an arc $(v_{j},v_{j+1})$ in $ \vec{S}_f$. Let $\vec{S}:=\vec{S}'\cup \vec{S}_f \setminus (v_1,v_{2i})$.

  (1) It follows directly from the definition that $l(v_1)=1$ $l(v_{2i})=0$. As $d_{\vec{S}}^+(v_1) = d_{\vec{S}'}^+(v_1)-1$, $d_{\vec{S}}^-(v_{1}) = d_{\vec{S}'}^-(v_{1})$ and $d_G(v_{1})= d_{G'}(v_{1})+1$, we conclude that $r(v_1) \ge 1$.
  And
  $d_{\vec{S}}^+(v_{2i}) = d_{\vec{S}'}^+(v_{2i})$, 
$d_{\vec{S}}^-(v_{2i}) = d_{\vec{S}'}^-(v_{2i}) -1 $ and $d_G(v_{2i}) \ge d_{G'}(v_{2i})+1$, we conclude that $r(v_{2i}) \ge 0$.

 (2) Note that for $j=2,\cdots,2i-1$, $d^{+}_D(v_j)\le d_{G}(v_j)-1$ and $d^-_{\vec{S}}(v_j)\ge 1$ , furthermore $d^+_D(v_j)=1$  and $d_{\vec{S}}^+(v_j) \leq 1$. Hence, $d^{+}_{D}(v_j)\le \min\{3 - d_{\vec{S}}^+(v_j),d_{G}(v_j)-1 +d_{\vec{S}}^-(v_j) \}.$

If $v_1v_2 \in S$, Let $D:=D'\cup (v_2,v_1)\bigcup_{j=1}^{2i-1}(v_j,v_{j+1})$ .
 For $j=1,2,\ldots, 2i-1$, if $v_jv_{j+1}\in S_f$, then it is oriented as an arc $(v_{j},v_{j+1})$ in $ \vec{S}_f$. Let $\vec{S}:=\vec{S}'\cup \vec{S}_f \setminus (v_1,v_{2i})$.

 (1) It follows directly from the definition that $l(v_1)=0$ $l(v_{2i})=0$. As $d_{\vec{S}}^+(v_1) = d_{\vec{S}'}^+(v_1)$, $d_{\vec{S}}^-(v_{1}) = d_{\vec{S}'}^-(v_{1})$ and $d_G(v_{1})= d_{G'}(v_{1})$, we conclude that $r(v_1) \ge 0$.
  And
  $d_{\vec{S}}^+(v_{2i}) = d_{\vec{S}'}^+(v_{2i})$, 
$d_{\vec{S}}^-(v_{2i}) = d_{\vec{S}'}^-(v_{2i}) -1 $ and $d_G(v_{2i}) \ge d_{G'}(v_{2i})+1$, we conclude that $r(v_{2i}) \ge 0$.

 (2) Note that for $j=3,\cdots,2i-1$, $d^{+}_D(v_j)\le d_{G}(v_j)-1$ and $d^-_{\vec{S}}(v_j)\ge 1$. furthermore $d^+_D(v_j)=1$  and $d_{\vec{S}}^+(v_j) \leq 1$. For $v_2$, $d^+_D(v_2)=d_G(v_2)=2$, $d^-_{\vec{S}}(v_2)=1$, $d^+_{\vec{S}}(v_2)\le 1$.
 
 Hence for $j=2,\cdots,2i-1$, $d^{+}_{D}(v_j)\le \min\{3 - d_{\vec{S}}^+(v_j),d_{G}(v_j)-1 +d_{\vec{S}}^-(v_j) \}.$
 
In conclusion, we get a valid orientation $(D,\vec{S})$ of $(G,S)$.
    
\end{proof}

\end{document}